\documentclass{amsart}
\usepackage[applemac]{inputenc}

\usepackage[colorlinks=true,linkcolor=red,citecolor=blue,urlcolor=blue,hypertexnames=false]{hyperref}
\usepackage{bookmark}
\usepackage{amsthm,thmtools,amssymb,amsmath,amscd}

\usepackage{fancyhdr}
\usepackage{esint}
\usepackage{enumerate}

\usepackage{pictexwd,dcpic}

\swapnumbers
\declaretheorem[name=Theorem,numberwithin=section]{thm}

\declaretheorem[name=Lemma,sibling=thm]{lemma}

\declaretheorem[name=Corollary,sibling=thm]{cor}


\numberwithin{equation}{section}

\usepackage{cleveref}
\crefname{lemma}{Lemma}{Lemmata}
\crefname{prop}{Proposition}{Propositions}
\crefname{thm}{Theorem}{Theorems}
\crefname{cor}{Corollary}{Corollaries}
\crefname{defn}{Definition}{Definitions}
\crefname{example}{Example}{Examples}
\crefname{rem}{Remark}{Remarks}
\crefname{assum}{Assumption}{Assumptions}
\crefname{nota}{Notation}{Notation}

\usepackage{autonum}


\newcommand{\cn}{\colon}
\newcommand{\sub}{\subset}
\newcommand{\ov}{\overline}
\newcommand{\mr}{\mathring}

\newcommand{\bbR}{\mathbb{R}}

\newcommand{\8}{\infty}

\newcommand{\al}{\alpha}
\newcommand{\be}{\beta}

\newcommand{\e}{\epsilon}
\newcommand{\ka}{\kappa}

\newcommand{\om}{\omega}

\newcommand{\Si}{\Sigma}
\newcommand{\p}{\varphi}

\newcommand{\vt}{\vartheta}
\newcommand{\Om}{\Omega}
\newcommand{\D}{\Delta}

\newcommand{\T}{\Theta}


\newcommand{\cS}{\mathcal{S}}


\newcommand{\del}{\partial}
\newcommand{\n}{\nabla}
\newcommand{\II}[2]{\mrm{II}\br{#1,#2}}

\newcommand{\rt}{\sqrt}

\newcommand{\pard}[2]{\frac{\partial #1}{\partial #2}}

\newcommand{\ip}[2]{\left\langle #1,#2 \right\rangle}
\newcommand{\fr}[2]{\frac{#1}{#2}}
\newcommand{\x}{\times}

\DeclareMathOperator{\osc}{osc}
\DeclareMathOperator{\const}{const}

\DeclareMathOperator{\Rm}{Rm}
\DeclareMathOperator{\Rc}{Rc}

\DeclareMathOperator{\vol}{vol}


\newcommand{\pf}[1]{\begin{proof}{\parskip\baselineskip{ #1}} \end{proof}}
\newcommand{\eq}[1]{\begin{equation}\begin{alignedat}{2} #1 \end{alignedat}\end{equation}}

\newcommand{\br}[1]{\left(#1\right)}
\newcommand{\abs}[1]{\lvert #1\rvert}
\newcommand{\enum}[1]{\begin{enumerate}[(i)] #1 \end{enumerate}}


\newcommand{\ra}{\rightarrow}


\newcommand{\mrm}{\mathrm}
\newcommand{\bb}[1]{\mathbb{#1}}

\newcommand{\hp}{\hphantom}
\newcommand{\q}{\quad}

\begin{document}

\title[Isoperimetric problems in generalized Robertson-Walker spaces]{Isoperimetric problems for spacelike domains in generalized Robertson-Walker spaces}
\keywords{Isoperimetric problem; Locally constrained curvature flows; Lorentzian manifolds}
\subjclass[2010]{53B30, 53C21, 53C44}
\thanks{BL was supported by Leverhulme Trust Research Project Grant RPG-2016-174. The work of JS was funded by the "Deutsche Forschungsgemeinschaft" (DFG, German research foundation); Project "Quermassintegral preserving local curvature flows"; Grant number SCHE 1879/3-1.}

\author[B. Lambert]{Ben Lambert}
\address{Department of Mathematics, University College London,
Gower Street, London WC1E 6BT, UK}
\email{\href{mailto:b.lambert@ucl.ac.uk}{b.lambert@ucl.ac.uk}}

\author[J. Scheuer]{Julian Scheuer}
\address{Department of Mathematics, Columbia University, 2990 Broadway NY 10027, USA}
\email{\href{mailto:jss2291@columbia.edu}{jss2291@columbia.edu}, \href{mailto:julian.scheuer@math.uni-freiburg.de}{julian.scheuer@math.uni-freiburg.de}}
\urladdr{\href{https://home.mathematik.uni-freiburg.de/scheuer/}{https://home.mathematik.uni-freiburg.de/scheuer/}}

\begin{abstract}
We use a locally constrained mean curvature flow to prove the isoperimetric inequality for spacelike domains in generalized Robertson-Walker spaces satisfying the null convergence condition. 
\end{abstract}

\maketitle

\section{Introduction}

For a bounded domain $\Om$ of the two-dimensional Euclidean, hyperbolic or  spherical space there hold respectively
		\eq{L^{2}\geq 4\pi A,\q L^{2}\geq 4\pi A+A^{2},\q L^{2}\geq 4\pi A-A^{2},}
where $L$ is the boundary length and $A$ the area of $\Om$.
Equality holds precisely on balls. 
For hypersurfaces of the $(n+1)$ Euclidean space there holds
\eq{\label{1}\abs{\del\Om}\geq c_{n}\abs{\Om}^{\fr{n}{n+1}},}
with an explicit dimensional constant $c_{n}$, where again, equality holds 	precisely on balls. Here $\abs{\cdot}$ is the Hausdorff measure of a submanifold of the appropriate dimension. Neither in the hyperbolic space nor in the sphere an explicit form like \eqref{1} is available, which holds in all dimensions.
However, the isoperimetric problem is solved, \cite{Rado:10/1935,Schmidt:12/1940b,Schmidt:12/1940}: For any bounded domain $\Om$ in the hyperbolic or spherical space and a ball $B_{r}$ with $\abs{\Om}=\abs{B_{r}}$ there holds
\eq{\abs{\del\Om}\geq\abs{\del B_{r}}} with equality precisely when $\Om=B_{r}$. We can describe this result in an alternative way: Define
\eq{f_{0}(r)=\abs{B_{r}},\q f_{1}(r)=\abs{\del B_{r}},}					then 
\eq{\abs{\del\Om}\geq \abs{\del B_{r}}=f_{1}(r)=f_{1}\circ f_{0}^{-1}(\abs{B_{r}})\equiv\p(\abs{\Om}),}
which is an implicit form of the isoperimetric inequality. In more general Riemannian warped product spaces such an implicit isoperimetric inequality was deduced in \cite{GuanLiWang:/2019}, while also on other Riemannian spaces isoperimetric inequalities have received lots of attention, \cite{BrayMorgan:12/2001,Brendle:07/2019,CorvinoGerekGreenbergKrummel:05/2007,HoffmannSpruck:11/1974,White:/2009}. For the comparison of the volume of a domain in Lorentzian spaces which is bounded by spacelike hypersurfaces, much less seems to be known. Here the question is, which hypersurfaces maximize area under volume constraint. Some results are available in the Minkowski space \cite{BahnEhrlich:06/1999}, on two-dimensional Lorentzian surfaces satisfying a curvature bound \cite{Bahn:07/1999} and in warped product spaces, such as in a certain class of Friedman-Robertson-Walker spaces \cite{AbedinCorvinoKapitanWu:11/2009}.

The goal of this paper is to solve the isoperimetric problem for spacelike domains in a large class of Lorentzian warped product manifolds, which we describe in the following.

A spacetime 
\eq{\label{GRW}N=(a,b)\x \cS_{0},\q \bar g=-dr^{2}+\vt^{2}(r)\hat g,}
with $a<b$ real numbers, a compact $n$-dimensional Riemannian manifold $(\cS_{0},\hat g)$ and positive warping factor $\vt\in C^{\8}([a,b))$ is called {\it{generalized Robertson-Walker space}}.
In this paper we use the locally constrained mean curvature flow
\eq{\label{Div-Flow}\dot{x}=(\D_{\Si_{t}} \T) \nu}
to solve an isoperimetric problem in $N$.
Here $\T'(r)=\vt(r)$, $\T$ is understood to be defined on the ambient manifold and $\nu$ is the future directed timelike (i.e. $\bar g(\nu,\nu)=-1$) normal vector to the flow hypersurfaces $\Si_{t}$.
The terminology used for this flow stems from the fact that there also holds
\eq{\label{Flow}\dot{x}=(uH-n\vt'(\rho))\nu,}
where $H$ is the mean curvature of the flow hypersurfaces with respect to $-\nu$, $\rho(t,\cdot)=r_{|\Si_{t}}$ and $u$ is the support function
\eq{u=-\bar g(\vt\del_{r},\nu).}
The suitable adaption of \eqref{Flow} to the Riemannian setting was introduced in \cite{GuanLi:/2015} and further studied in \cite{GuanLiWang:/2019}. 

A Lorentzian manifold $N$ is said to satisfy the \emph{null convergence condition} if for all lightlike vectors $X$,
\eq{\label{NCC}\ov{\Rc}(X,X)\geq 0.}
We also say that the null convergence condition is satisfied \emph{strictly}, if
\eqref{NCC} holds with strict inequality for all nonzero lightlike $X$. 
We observe that this condition is implied by the more commonly used timelike convergence condition which is well known to be important in prescribed mean curvature equations, see for example the work of Bartnik \cite{Bartnik:/1984}, Ecker and Huisken \cite{EckerHuisken:/1991} and Ecker \cite{Ecker:/1997}. However, it is also valid on any Einstein manifold, while the timelike convergence condition is not.

We prove the following isoperimetric inequality for domains bounded by a time-slice and a closed spacelike hypersurface. 

\begin{thm}\label{main}
Let $n\geq 2$ and $N^{n+1}$ be a generalized Robertson-Walker space which satisfies the null convergence condition. Let $\Si\sub N$ be a spacelike, compact, achronal and connected hypersurface. Then there holds
\eq{\p(\vol(\hat\Si))\geq \abs{\Si},}
where $\p\cn [0,\vol(N))\ra \bbR_{+}$ is the function which gives equality on the coordinate slices. Furthermore:
\enum{
\item
If equality holds, then $\Si$ is totally umbilic. 
\item If $N$ satisfies the null convergence condition strictly, then equality is attained precisely on the timeslices of $N$.
}
\end{thm}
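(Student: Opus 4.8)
The plan is to run the locally constrained mean curvature flow \eqref{Div-Flow} starting from $\Si$ and to show that it converges smoothly to a coordinate slice, while along the flow the enclosed volume $\vol(\hat\Si_t)$ stays constant and the area $\abs{\Si_t}$ is nondecreasing. Since at the limit slice the isoperimetric inequality becomes equality by definition of $\p$, monotonicity of area and constancy of volume immediately give $\p(\vol(\hat\Si))=\p(\vol(\hat\Si_{\infty}))=\abs{\Si_{\infty}}\geq\abs{\Si}$, which is the claimed inequality.

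I would organize this as follows. \emph{Step 1: First variation identities.} Using the formula $\dot x=(uH-n\vt'(\rho))\nu$ together with the divergence structure of $\T$, compute $\tfrac{d}{dt}\vol(\hat\Si_t)$ and $\tfrac{d}{dt}\abs{\Si_t}$. The normal speed $\D_{\Si_t}\T$ integrates to zero against the volume element of $\Si_t$ (it is a pure divergence on $\Si_t$), which should give $\tfrac{d}{dt}\vol(\hat\Si_t)=0$. For the area, the first variation gives $\tfrac{d}{dt}\abs{\Si_t}=-\int_{\Si_t}H\,(\D_{\Si_t}\T)$, and after integrating by parts and invoking a Minkowski-type or Heintze--Karcher-type inequality relating $\int uH$, $\int\vt'(\rho)$, and the geometry, one shows this is $\geq 0$, with equality forcing total umbilicity. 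This is where the null convergence condition enters: it controls the sign of the relevant curvature terms in the Gauss/Codazzi computation, exactly as in the Riemannian analysis of \cite{GuanLiWang:/2019}, \cite{GuanLi:/2015}.

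\emph{Step 2: Long-time existence and a priori estimates.} Establish that the flow exists for all time and converges to a slice. The achronality of $\Si$ should propagate and give a graphical representation $\Si_t=\graph\rho(t,\cdot)$ over $\cS_0$, reducing \eqref{Div-Flow} to a scalar parabolic PDE for $\rho$. One needs: a $C^0$ bound (the function $\rho$ stays in a compact subinterval of $(a,b)$, using barriers built from slices and the structure of the speed), a gradient estimate (equivalently, a uniform bound on the support function $u$, so that $\Si_t$ stays uniformly spacelike — this is the usual heart of spacelike hypersurface flows), and then $C^{2,\al}$ and higher estimates via Krylov--Safonov and Schauder. Convergence of $\vol(\hat\Si_t)$ to a limit (it is monotone-constant, so trivially) and of $\abs{\Si_t}$ (monotone and bounded above by $\p$ of the constant volume, hence convergent) forces $\tfrac{d}{dt}\abs{\Si_t}\to0$ along a sequence, and together with the estimates this forces the limit to be totally umbilic; classification of totally umbilic slices-or-spheres in GRW spaces then identifies the limit as a coordinate slice. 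For the equality case, equality in the monotonicity formula at $t=0$ forces $\Si$ itself totally umbilic (giving (i)); under the strict null convergence condition the umbilicity inequality is strict unless the second fundamental form is pure trace \emph{and} the ambient curvature term vanishes, which only happens on timeslices, giving (ii).

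\emph{Main obstacle.} The principal difficulty is the a priori gradient estimate, i.e.\ keeping the evolving hypersurfaces uniformly spacelike (a uniform bound $u\leq C$) for all time; degeneracy of this bound is the typical failure mode for flows of spacelike hypersurfaces, and here the inhomogeneous, non-concave-looking speed $\D_{\Si_t}\T$ must be handled carefully, presumably by a maximum-principle argument on a suitable test quantity involving $u$ and $\T$, exploiting the null convergence condition to absorb the bad curvature terms. A secondary technical point is justifying the monotonicity of area rigorously, i.e.\ proving the sharp integral (Heintze--Karcher / Minkowski) inequality in the Lorentzian GRW setting with its equality characterization; once these two ingredients are in place, the remainder is the now-standard machinery for parabolic flows of graphs.
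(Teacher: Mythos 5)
Your high-level strategy matches the paper exactly: run the flow \eqref{Div-Flow}, show enclosed volume is conserved and area is nondecreasing, prove convergence to a slice, and extract the inequality and rigidity from the monotonicity formula. The $C^{0}$ bound from the maximum principle, the gradient estimate as the key a priori bound, and Krylov--Safonov/Schauder for higher regularity are all as in the paper. However, two points deserve attention.

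\textbf{The identification of the limit has a genuine gap.} You propose to show the subsequential limit is totally umbilic (from $\del_{t}\abs{\Si_{t}}\to 0$) and then to invoke a ``classification of totally umbilic slices-or-spheres in GRW spaces'' to conclude the limit is a coordinate slice. This classification does not hold in the generality of the theorem: de~Sitter space is a GRW space satisfying the null convergence condition (it is Einstein), and it possesses many compact, spacelike, totally umbilic hypersurfaces that are not coordinate slices. Under the \emph{non-strict} NCC, $\del_{t}\abs{\Si_{t}}\to 0$ only forces $\int\abs{\mr A}^{2}u\to 0$ and $\int\ov{\Rc}(\n\T,\nu)\to 0$, and the second quantity can vanish on non-slices. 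The paper avoids this entirely: since $\T$ satisfies the pure drift-free equation $\dot\T=u\D\T$ with $u\D$ uniformly elliptic, the strong maximum principle forces $\osc\T(t,\cdot)$ to be strictly decreasing unless $\T$ is constant; an Arzel\`a--Ascoli compactness argument on the time-translated flows then shows $\osc\T\to 0$, so the limit is a slice, with no classification of umbilic hypersurfaces needed. You should replace your classification step with this oscillation argument (or something equivalent). As a minor related issue, your claim that $\abs{\Si_{t}}$ is bounded above by $\p$ of the conserved volume is circular (this is what we are proving); the uniform $C^{1}$ estimate on $\rho$ already gives the needed area bound.

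\textbf{The mechanism behind area monotonicity is only gestured at.} The paper does not quite use a Heintze--Karcher inequality. The computation is: from $\dot g_{ij}=2h_{ij}\D\T$ one gets $\del_{t}\abs{\Si_{t}}=\int(H^{2}u-n\vt'H)$; the Codazzi identity gives $\n_{i}\s_{2}^{ij}=-\ov{\Rc}(x^{j},\nu)$, so integrating $\s_{2}^{ij}\n^{2}_{ij}\T=2\s_{2}u-(n-1)\vt'H$ by parts produces $\del_{t}\abs{\Si_{t}}=\tfrac{n}{n-1}\int\bigl(\abs{\mr A}^{2}u+\ov{\Rc}(\n\T,\nu)\bigr)$. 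The sign of the last term is not immediate from NCC; the key algebraic lemma of the paper shows $\ov{\Rc}(\n\T,\nu)=u\,\ov{\Rc}(W,W)$ for an explicit \emph{lightlike} vector $W$ built from $\nu$ and $\ov\n\T$, which is what converts NCC into a usable nonnegativity statement. This same lemma is also what makes the gradient estimate close (it kills the bad ambient-curvature term in the evolution of $u$). Your sketch correctly intuits that NCC must ``control the sign of the relevant curvature terms,'' but this conversion is the technical heart and would need to be supplied.

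The rigidity analysis at equality (umbilicity from the monotonicity, and slice characterization from $W=0$ under strict NCC) is correctly anticipated and matches the paper.
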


A hypersurface $\Si\sub N$ is called spacelike, if the induced metric is positive definite. $\hat \Si$ is the region enclosed by the horizon $\{a\}\x \cS_{0}$ and $\Si$, see \Cref{Def}. Such a spacelike hypersurface is called achronal if no timelike curve meets $\Si$ more than once (see \cite[p.~425]{ONeill:/1983} for more details). Also note that this is automatically satisfied if $N$ is simply connected, \cite[p.~427]{ONeill:/1983}.

\section{Conventions and some hypersurface geometry}\label{Def}

\subsection{Basic notation}
Throughout this paper we use the curvature conventions from \cite{ONeill:/1983}, in particular the Riemann tensor of a semi-Riemannian manifold with metric $\bar g$ and Levi-Civita connection $\ov\n$ is defined by
\eq{\ov \Rm(X,Y)Z=\ov\n_{Y}\ov\n_{X}Z-\ov\n_{X}\ov\n_{Y}Z-\ov\n_{[Y,X]}Z}
for all vector fields $X,Y,Z$ on $N$. 

Given any orthonormal frame $E_1, \ldots, E_{n+1}$ where $E_{n+1}$ is timelike, define the Ricci curvature by
\eq{\ov{\Rc}(X,Y) = \bar g(\ov\Rm(E_i,X)E^i,Y) - \bar g(\ov\Rm(E_{n+1},X)E_{n+1},Y).}
Here the summation has been chosen so that the Ricci curvature of the Lorentz product metric on $\bb{S}^n\times \bb{R}$ is non-negative.

\subsection*{Spacelike hypersurfaces}
 
Let $\Si\sub N$ be a spacelike, compact, connected and achronal hypersurface given by an embedding $x$. The manifold $N$ is globally hyperbolic \cite[Thm.~1.4.2]{Gerhardt:/2006} and $\cS_0$
 is a Cauchy hypersurface. Thus $\Si$ is a graph over $\cS_{0}$, 
\eq{\Si=\{(\rho(x^{i}),x^{i})\cn (x^{i})\in \cS_{0} \},}
see \cite[Prop.~1.6.3]{Gerhardt:/2006}.
Latin indices range between $1$ and $n$ and greek indices range from $0$ to $n$. Sometime we will write 
\eq{x^{0}=r.}

We state the relations between the geometric quantities of $\Si$ and the graph function $\rho$. Details can be found in \cite[Sec.~1.6]{Gerhardt:/2006}.
 We use the coordinate based notation, e.g. the induced metric $g$ is 
\eq{g_{ij}=g(\del_{i},\del_{j})}
and we denote its Levi-Civita connection by $\n_{i}=\n_{\del_{i}}$. We also write
\eq{x_{i}:=\del_{i}x.}

Let $\nu$ be the future directed timelike normal, i.e.
\eq{\bar g(\del_{r},\nu)<0,}
and define the shape operator $S=(h^{i}_{j})$ of $\Si$ with respect to $-\nu$.
Then we call
\eq{A=h_{ij}:=g_{ik}h^{k}_{j}=-\bar g\br{\II{\del_{i}}{\del_{j}},\nu}=\ov{g}(\ov \n_{i}\nu, x_{j}).}
 the second fundamental form of $\Si$, which has eigenvalues with respect to $g$ ordered by
 \eq{\ka_{1}\leq\dots\leq \ka_{n}.}
 For any spacelike hypersurface the Codazzi--Mainardi equations may be written \cite[Prop.~33, p.~115]{ONeill:/1983}
\eq{\label{Codazzi}\bar g(\ov{\Rm}(X,Y)Z,\nu)=\n_Xh(Y,Z) - \n_Yh(X,Z).}

The second fundamental form of the slices $\{x^{0}=r\}$ is
\eq{\bar{h}_{ij}:=\fr{\vt'(r)}{\vt(r)}\bar g_{ij},}
\cite[(1.6.13)]{Gerhardt:/2006},
while the induced metric is
\eq{g_{ij}=-\del_{i}\rho \del_{j}\rho+\vt^{2}(\rho)\hat g_{ij}=-\del_{i}\rho\del_{j}\rho+\bar g_{ij}.}
With the definition
\eq{v^{2}=1-\vt^{-2}\hat{g}^{ij}\del_{i}\rho\del_{j}\rho,}
the second fundamental form satisfies
\eq{\label{h}v^{-1}h_{ij}=\n_{ij}\rho+\bar h_{ij},}
\cite[(1.6.11)]{Gerhardt:/2006}. Note that in this reference the past directed normal is used. 

Suppose $\T$ solves $\T'(r)=\vt(r)$, and by abuse of notation we identify $\T=\T(r)$ so that $\T:N\ra \bb{R}$. 
As a function on $N$ we have
\eq{\ov{\n}^2_{\alpha\beta} \T = -\vt' \ov{g}_{\alpha\beta},}
while on $\Sigma_t$, 
\eq{\label{rho}\n_{ij}\T(\rho)=\vt' \del_{i}\rho\del_{j}\rho+\vt \n_{ij}\rho=-\vt' g_{ij}+\fr{\vt}{v}h_{ij}.}
In particular we observe that \eqref{Div-Flow} and \eqref{Flow} are the same flows.

We define the support function 
\eq{u:=\fr{\vt}{v}=-\bar g(\vt\del_{r},{\nu}) = \ov{g}(\ov \n \T, \nu),}
and observe that this is related to $\n\T$ by the identity
\eq{|\n \T|^2\equiv g^{ij}\del_{i}\T\del_{j}\T= u^2-\vt^2.}

We will use the following important inequality in several places.
\begin{lemma} If $N$ satisfies the null convergence condition and $\Si\sub N$ is a spacelike graph as above, then
\eq{\label{TCCuse}\ov{\Rc}(\n \Theta, \nu)\geq 0.}
\end{lemma}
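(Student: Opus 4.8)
The inequality to prove is $\ov{\Rc}(\n\T,\nu)\geq 0$, where $\n\T$ is the tangential gradient of $\T$ along $\Si$ and $\nu$ is the future timelike normal. The natural strategy is to expand $\n\T$ and $\nu$ in a convenient frame adapted to the warped product structure, and to reduce the computation to the null convergence condition \eqref{NCC} applied to a suitable lightlike vector. Concretely: on $\Si$ we can write the ambient gradient $\ov\n\T$ in terms of its tangential and normal parts. Since $\ov\n\T = \vt\,\del_r$ (because $\T'(r)=\vt(r)$ and $\bar g = -dr^2 + \vt^2\hat g$, so $\ov\n r = -\del_r$, hence $\ov\n\T = -\vt\,\ov\n r \cdot(-1)$... let me just say $\ov\n\T = \vt\del_r$), we have the orthogonal decomposition $\vt\del_r = \n\T - u\,\nu$, using that $u = -\bar g(\vt\del_r,\nu) = \ov g(\ov\n\T,\nu)$ and the normal component of $\ov\n\T$ is $-\ov g(\ov\n\T,\nu)\nu \cdot(-1)$ for a timelike unit normal (signs: $\bar g(\nu,\nu)=-1$, so the normal part of a vector $V$ is $-\bar g(V,\nu)\nu$). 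Thus $\n\T = \vt\del_r + u\nu$.

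**Key steps.** First I would substitute $\n\T = \vt\del_r + u\nu$ into $\ov{\Rc}(\n\T,\nu)$ and expand bilinearly, obtaining $\vt\,\ov{\Rc}(\del_r,\nu) + u\,\ov{\Rc}(\nu,\nu)$. Next, I would write $\nu$ itself in terms of the frame: decomposing $\nu$ into its $\del_r$-component and a part tangent to the fibre $\cS_0$, one gets $\nu = \tfrac{u}{\vt}\del_r + (\text{spacelike fibre vector } W)$ where $W$ has $\bar g(W,W) = \tfrac{u^2}{\vt^2} - 1 \geq 0$ (this follows from $\bar g(\nu,\nu)=-1$ and $u\geq \vt$, equivalently $v\leq 1$, which holds for spacelike graphs). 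The crucial observation is that the combination $\vt\del_r + u\nu = \n\T$ should be, up to scaling, proportional to a lightlike vector — indeed $\bar g(\n\T,\n\T) = u^2 - \vt^2 = |\n\T|^2 \geq 0$, so $\n\T$ is spacelike or null in general, not lightlike. So instead I would consider the vector $\del_r + \tfrac{1}{\vt}(\n\T - \vt\del_r)$... Let me reconsider: the right lightlike vector is $Y := \vt\del_r + \nu\cdot(\text{something})$. Actually, set $Y = u\,\del_r + \vt\,\nu$; then $\bar g(Y,Y) = -u^2 + 2u\vt\,\bar g(\del_r,\nu) - \vt^2 = -u^2 - 2u\vt\cdot(-u/\vt) \cdot$... wait $\bar g(\del_r,\nu) = -u/\vt$ since $u = -\bar g(\vt\del_r,\nu)$. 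So $\bar g(Y,Y) = -u^2 + 2u\vt(-u/\vt) - \vt^2(-1)\cdot$... hmm, $\bar g(\nu,\nu) = -1$, so $= -u^2 + 2u\vt(-u/\vt) + \vt^2(-1) = -u^2 - 2u^2 - \vt^2 \neq 0$. The correct choice: $Y = \del_r + Z$ with $Z$ a unit fibre vector gives $\bar g(Y,Y) = -1 + 1 = 0$. Then relate $\ov\Rc(\n\T,\nu)$ to $\ov\Rc(Y,Y)$ for an appropriate such $Y$.

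**The main obstacle.** The genuinely delicate part is the algebraic bookkeeping that rewrites $\ov\Rc(\n\T,\nu)$ as a nonnegative multiple of $\ov\Rc(Y,Y)$ for a lightlike $Y$, which in turn requires knowing that the "mixed" Ricci curvatures $\ov\Rc(\del_r, W)$ for $W$ tangent to the fibre vanish in a GRW space. This fact — that in a warped product $-dr^2 + \vt^2\hat g$ one has $\ov\Rc(\del_r, W) = 0$ whenever $W$ is tangent to $\cS_0$ — is standard (it follows from the O'Neill curvature formulas for warped products, e.g. \cite[Ch.~7]{ONeill:/1983}), and once invoked, the decomposition $\nu = \tfrac{u}{\vt}\del_r + W$ gives
\eq{\ov\Rc(\n\T,\nu) = \ov\Rc(\vt\del_r + u\nu,\ \tfrac{u}{\vt}\del_r + W).}
Expanding and using $\ov\Rc(\del_r,W)=0$ collapses the cross terms, and one is left with a combination of $\ov\Rc(\del_r,\del_r)$, $\ov\Rc(W,W)$ and $\ov\Rc(\del_r, \cdot)$-terms that assembles into $\ov\Rc$ of the lightlike vector $\del_r + \tfrac{\vt}{|W|_{\hat{}}}W$ (when $W\neq 0$) times $|W|^2 \geq 0$, and vanishes trivially when $W=0$ (i.e.\ on a slice). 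Invoking \eqref{NCC} then finishes the proof. I would carry this out by choosing a local orthonormal fibre frame at a point, so that $W$ points along one frame direction, reducing everything to a $2\times 2$ computation in the $\del_r$--$W$ plane.
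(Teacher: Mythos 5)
Your strategy is essentially the paper's: split $\n\T$ and $\nu$ into their $\del_r$- and fibre-components, use that in a GRW space the mixed Ricci terms vanish, $\ov{\Rc}(\del_r,W)=0$ for $W$ tangent to $\cS_0$ (equivalently, $\ov\n\T$ is a Ricci eigenvector, which is exactly the O'Neill fact the paper invokes), and reassemble the result as a nonnegative multiple of $\ov{\Rc}$ evaluated on a lightlike vector; the paper's $V$ is precisely your fibre part of $\nu$ and its null vector $W$ is, up to scale, your $\del_r$ plus a unit fibre direction. One correction is needed, and it is not cosmetic: since $\bar g(\del_r,\del_r)=-1$ one has $\ov\n r=-\del_r$ and hence $\ov\n\T=-\vt\,\del_r$, so the tangential gradient is $\n\T=\ov\n\T+u\nu=-\vt\,\del_r+u\nu$ (your version $\n\T=\vt\,\del_r+u\nu$ gives $\bar g(\n\T,\n\T)=-\vt^2-3u^2<0$, contradicting $\abs{\n\T}^2=u^2-\vt^2$). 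The sign matters for the final bookkeeping: writing $\nu=\tfrac{u}{\vt}\del_r+s\,e$ with $e$ a $\bar g$-unit fibre vector and $s=\sqrt{u^2/\vt^2-1}$, your sign produces the coefficients $u\bigl(1+\tfrac{u^2}{\vt^2}\bigr)$ on $\ov{\Rc}(\del_r,\del_r)$ and $u\bigl(\tfrac{u^2}{\vt^2}-1\bigr)$ on $\ov{\Rc}(e,e)$, which do \emph{not} combine into a null Ricci term, so the null convergence condition alone cannot conclude; with the correct sign the cross terms drop as you intend and one gets exactly
\[\ov{\Rc}(\n\T,\nu)=u\br{\fr{u^{2}}{\vt^{2}}-1}\,\ov{\Rc}\br{\del_r+e,\,\del_r+e}\geq 0,\]
which coincides with the paper's identity $\ov{\Rc}(\n\T,\nu)=u\,\ov{\Rc}(W,W)$. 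Two small points to make explicit: you need $u>0$, which holds since $\Si$ is spacelike and $\nu$ future directed ($u=\vt/v$), and the degenerate case $s=0$ (a slice point, $\n\T=0$) is trivially fine, as you note.
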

\pf{In a GRW space $\ov\n \T$ is an eigenvector of $\ov{\Rc}$, \cite[Cor.~43, p.~211]{ONeill:/1983}, and so
\eq{\label{TCCuse-1}
 \ov{\Rc}(\nu, \n \T) &=\ov{\Rc}(\nu, \ov \n \T + u \nu)\\
 &=u\ov{\Rc}(\nu,\nu)+\ov{\Rc}(\nu, \ov \n \T)\\
 &=u\ov{\Rc}\left(V-\frac{\ov \n \T}{\vt} \frac{u}{\vt},V-\frac{\ov \n \T}{\vt} \frac{u}{\vt}\right)+\ov{\Rc}\left(V-\frac{\ov \n \T}{\vt} \frac{u}{\vt}, \ov \n \T\right)\\
 &=u\left[\left(\frac{u^2}{\vt^2}-1\right)\ov{\Rc}\left(\frac{\ov \n \T}{\vt},\frac{\ov \n \T}{\vt}\right)+\ov{\Rc}\left(V,V\right)\right]\\
&=u\ov{\Rc}(W,W),
 }
where $V$ is the projection of $\nu$ onto $ (\ov \n \T)^\perp$ and
\eq{\label{TCCuse-2}W=V+\sqrt{\frac{u^2}{\vt^2}-1}\,\frac{\ov \n \T}{\vt}.}
 Since $\left|V\right|^2 = u^2\vt^{-2}-1$, $W$ is a lightlike vector and the result follows. 
}

\subsection*{Area and volume calculations}
Let $\Si$ be graphical as above. We define the integral of a function $f\in C^{\8}(\Si)$ by
\eq{\int_{\Si}f:=\int_{\mathcal{S}_0}
f\,d\om_{g}.}
Here $d\om_{g}$ is the Riemannian volume form on $\Si$. 
For
\eq{\hat\Si:=\{(r,\xi)\in N\cn a\leq r\leq \rho(\xi),\, \xi\in\cS_{0}\}}
we define the {\it{enclosed volume}} (see \cite[p.~194]{ONeill:/1983}) by
\eq{\label{vol}\vol(\hat\Si):=\int_{\cS_{0}}\int_{a}^{\rho(\cdot)}\fr{\rt{\det(\bar g_{ij}(s,\cdot))}}{\rt{\det(\bar g_{ij}(a,\cdot))}}\,ds\,d\omega_a=\int_{\hat\Si}d\vol_{\bar g},}
where $d \omega_a$ is the volume form on the time slice $\{a\}\times\mathcal{S}_0$ and locally
\eq{d\vol_{\bar g}=\rt{\abs{\det (\bar g_{\al\be})}}.}
The surface area of $\Si$ is
\eq{\abs{\Si}=\int_{\Si}1.}
Suppose $\hat{\Sigma}\subset N$ is open with compact closure such that $\partial \hat{\Sigma}$ may be written as a union of smooth spacelike hypersurfaces with \emph{outward pointing} normal $\nu$. If $X$ is a smooth vector field on $\hat{\Sigma}$ then
\eq{\int_{\hat\Sigma} \ov{\operatorname{div}} X\, d\vol = -\int_{\partial \hat\Si}\ip{\nu}{X}.}
This follows from Stoke's theorem and \cite[Lemma~21, p.~195]{ONeill:/1983}. 

We now suppose that $\hat{\Sigma}\subset N$ is a time dependent set which is bounded by spacelike hypersurfaces $\Sigma_0$ and $\Sigma$, where $\Sigma$ varies with time and $\Sigma_0$ is fixed. Let $x$ be a time dependent parametrization of $\Sigma$ then the above divergence theorem and \cite[Lemma~21, p.~195]{ONeill:/1983} imply that
\eq{\label{volchange}\partial_t\vol(\hat \Si_t) = -\int_{\Si}\ip{\dot{x}}{\nu}.}

\begin{lemma}
\label{AreaMonotone}
Let $n\geq 2$. Along the flow \eqref{Div-Flow}
\enum{
\item
the volume $\vol(\hat\Si)$ is preserved and
\item the surface area increases, provided $(N,\bar g)$ satisfies the timelike convergence condition.
}
If $\del_{t}|\Sigma_t|=0$, then $\Sigma$ is umbilic.
\end{lemma}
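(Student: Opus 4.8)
The plan is to compute the time derivatives of $\vol(\hat\Si_t)$ and $\abs{\Si_t}$ directly from the flow equation \eqref{Div-Flow}, using the area and volume formulas established above. For the volume, I would substitute $\dot x = (\D_{\Si_t}\T)\nu$ into \eqref{volchange}, which gives
\[
\partial_t\vol(\hat\Si_t) = -\int_{\Si}\ip{(\D_{\Si_t}\T)\nu}{\nu} = \int_{\Si}\D_{\Si_t}\T = 0
\]
since $\bar g(\nu,\nu)=-1$ and the integral of a Laplacian over a closed manifold vanishes. This establishes (i).

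For the surface area, I would use the first variation of area: along a normal flow $\dot x = f\nu$ with $f=\D_{\Si_t}\T$, the induced volume form evolves by $\partial_t\,d\om_g = -fH\,d\om_g$ where $H = -g^{ij}h_{ij}$ is the mean curvature with respect to $-\nu$ (the sign is dictated by the conventions fixed in \Cref{Def}; I would double-check it against \eqref{h} and the slice computation). Then
\[
\partial_t\abs{\Si_t} = -\int_{\Si}(\D_{\Si_t}\T)\,H.
\]
The key step is to rewrite this in a manifestly nonnegative form. Integrating by parts moves one derivative off $\T$; combined with \eqref{rho}, which expresses $\n_{ij}\T(\rho) = -\vt' g_{ij} + \tfrac{\vt}{v}h_{ij}$, one gets $\D_{\Si_t}\T = -n\vt' + \tfrac{\vt}{v}H = -n\vt' + uH$, recovering \eqref{Flow}. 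Substituting and integrating by parts once more, the $\vt'$ term should be massaged using the Codazzi equations \eqref{Codazzi} and the definition of $\ov{\Rc}$; the upshot I expect is an identity of the schematic shape
\[
\partial_t\abs{\Si_t} = \int_{\Si}\frac{\vt}{v}\left(\abs{h}^2 - \tfrac1n H^2\right) + \int_{\Si}\ov{\Rc}(\n\T,\nu) + (\text{divergence terms}),
\]
after which the first integrand is nonnegative by Cauchy--Schwarz and the second by \eqref{TCCuse} (note: this uses only the null convergence condition, so I would flag the apparent gap between the lemma's hypothesis — timelike convergence — and what the computation actually needs; presumably the stronger hypothesis is invoked only to keep the statement uniform, or the Ricci term here genuinely needs the timelike version after the integration by parts rearranges terms). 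The main obstacle is getting this Bochner-type identity with the correct signs and correctly accounting for all boundary/divergence contributions — in particular verifying that the integration by parts on the closed manifold $\Si$ produces no leftover terms and that the combination of the $\n\vt'$ term and the curvature term assembles cleanly into $\ov{\Rc}(\n\T,\nu)$.

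Finally, for the rigidity statement: if $\partial_t\abs{\Si_t}=0$ at some time, then each nonnegative integrand above must vanish identically. Vanishing of $\abs{h}^2 - \tfrac1n H^2$ forces $h_{ij} = \tfrac{H}{n}g_{ij}$ pointwise, i.e. $\Si$ is umbilic. I would present the argument so that the identity is stated as an equality of integrals with nonnegative integrands, making the rigidity conclusion immediate; the only care needed is that the coefficient $\vt/v = u$ is strictly positive (which holds since $u = -\bar g(\vt\del_r,\nu)>0$ for a future-directed timelike normal and $\vt>0$), so that vanishing of the integral genuinely forces pointwise vanishing of $\abs{h}^2 - \tfrac1n H^2$.
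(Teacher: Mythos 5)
Your plan is the right one and matches the paper's in outline --- the volume computation is exactly as in the paper, and the observation that the hypothesis only needs the null convergence condition (via \eqref{TCCuse}) rather than the stated timelike convergence condition is correct and perceptive; the paper's proof indeed cites only \eqref{TCCuse}. The rigidity argument at the end, hinging on $u>0$, is also right.

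However, the area computation leaves a genuine gap precisely where you flag ``the main obstacle.'' The step you describe as ``integrating by parts once more, the $\vt'$ term should be massaged using Codazzi'' is where the real work lives, and the paper's device for it is the Newton tensor $\sigma_2^{ij}=Hg^{ij}-h^{ij}$: by Codazzi, $\n_i\sigma_2^{ij}=-\ov{\Rc}(x^j,\nu)$, so contracting $\sigma_2^{ij}$ against $\n^2_{ij}\T=-\vt'g_{ij}+uh_{ij}$ and integrating by parts gives $\int\bigl(2\sigma_2 u-(n-1)\vt'H\bigr)=\int\ov{\Rc}(\nu,\n\T)$. Combining with $\del_t|\Si_t|=\int H\D\T=\int\bigl(uH^2-n\vt'H\bigr)$ (check your sign: with the paper's convention $H=g^{ij}h_{ij}$ and $\dot g_{ij}=2h_{ij}\D\T$, the variation comes out $+\int H\D\T$) yields the exact identity
\eq{\del_t|\Si_t|=\frac{n}{n-1}\int_{\Si_t}|\mr A|^2 u+\frac{n}{n-1}\int_{\Si_t}\ov{\Rc}(\n\T,\nu),}
not the schematic form you wrote. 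The $\frac{n}{n-1}$ prefactor matters --- this is where $n\geq 2$ enters, and your unit coefficients, while giving the right sign, don't fall out of the integration by parts you sketch. Naively integrating $\int\vt'H$ by parts either goes in circles or leaves stray terms; you need the specific algebraic fact that $\sigma_2^{ij}$ is divergence-free up to a Ricci term. Until that identity is isolated, the passage from $\int(uH^2-n\vt'H)$ to the manifestly signed expression is not established.
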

\pf{
By equations \eqref{volchange} and \eqref{Div-Flow} we have that
\eq{\del_{t}\vol(\hat\Si_{t})=\int_{\Si_{t}}\Delta \T=0.}

We recall that
\eq{ \sigma_2=\fr{1}{2}\br{H^{2}-\abs{A}^{2}}, \qquad \sigma_2^{ij} = Hg^{ij} - h^{ij},}
so
\begin{align}
\n_i\sigma_2^{ij} &= \n^j H - \n_i h^{ij}=-\bar g(\ov{\Rm}(x_i, x^j)x^i,\nu)=-\ov{\Rc}(x^j,\nu).
\end{align}
Therefore, by the divergence theorem
\eq{\label{tracesigmatwo}\int_{\Si_{t}} \br{2\sigma_2u - (n-1)\vt'H} =\int_{\Si_{t}} \sigma_2^{ij} \n^2_{ij}\T =\int_{\Si_{t}} \ov{\Rc}(\nu,\n\T).}
Using Lemma \ref{evolgijGRW} we get
\eq{\label{Ev-area}
\del_t|\Sigma_t| 
&= \int_{\Si_{t}} (H^2u - n\vt' H)\\
&=\int \br{H^2 - \frac{2n}{n-1}\sigma_2+\fr{n}{u(n-1)}\ov{\Rc}(\n \T, \nu)}u\\
&\geq \fr{n}{n-1}\int_{\Si_{t}} \abs{\mr A}^{2}u.
}
where we used \eqref{tracesigmatwo} on the second line and \eqref{TCCuse} on the last line. 
}


\section{Evolution equations}
We now calculate several required evolution equations.

\begin{lemma}On $\Sigma_t$ the function $\T $ satisfies \label{dfevolGRW}
\eq{\dot \T -u\Delta \T  =0.}
\end{lemma}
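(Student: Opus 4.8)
The plan is to obtain the identity by a direct chain-rule computation, exploiting that the normal speed of the flow \eqref{Div-Flow} is built out of $\T$ itself, together with the identity $u=\bar g(\ov\n\T,\nu)$ already recorded in \Cref{Def}. Throughout, $\dot\T$ denotes the material derivative of $\T$ along the flow, i.e. $\frac{d}{dt}\T(x(t,\xi))$ for fixed $\xi$, and I use that the velocity of \eqref{Div-Flow} is purely normal, $\dot x=(\Delta_{\Si_t}\T)\,\nu$.

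First I would apply the chain rule to the ambient function $\T\cn N\to\bbR$ along the flow curve $t\mapsto x(t,\xi)$, which gives
\eq{\dot\T=\bar g(\ov\n\T,\dot x).}
Inserting $\dot x=(\Delta\T)\,\nu$ yields $\dot\T=(\Delta\T)\,\bar g(\ov\n\T,\nu)$. Finally I would substitute $u=\bar g(\ov\n\T,\nu)$ from \Cref{Def} --- equivalently, since $\ov\n\T=-\vt\,\del_r$ one has $\bar g(\ov\n\T,\nu)=-\vt\,\bar g(\del_r,\nu)=u$ --- to obtain $\dot\T=u\,\Delta\T$, that is $\dot\T-u\Delta\T=0$.

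I do not expect a genuine obstacle here; the only delicate point is bookkeeping of conventions: that $\nu$ is the future-directed unit normal with $\bar g(\nu,\nu)=-1$ and that $\bar g(\ov\n\T,\nu)=u$ (both fixed in \Cref{Def}), and that $\dot\T$ is taken with respect to the purely normal parametrization of the flow and not, say, the graph parametrization over $\cS_0$. As a consistency check one may instead run the computation through the graph function: with $\T_{|\Si_t}=\T(\rho(t,\cdot))$, $\T'=\vt$ and $\del_t\rho=v\,\Delta\T$ one gets the graph time-derivative $\vt v\,\Delta\T$, which differs from the material derivative along the flow exactly by the tangential reparametrization term, and after accounting for it one again lands on $\dot\T-u\Delta\T=0$.
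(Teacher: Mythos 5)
Your computation is exactly the paper's proof: chain rule gives $\dot\T=\bar g(\ov\n\T,\dot x)=\bar g(\ov\n\T,\nu)\,\Delta\T=u\,\Delta\T$, using the identity $u=\ov g(\ov\n\T,\nu)$ recorded in \Cref{Def}. The argument is correct and takes essentially the same (one-line) approach, with the extra consistency check via the graph parametrization being a harmless addition.
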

\begin{proof}
We calculate that
\begin{align}
\dot{\T } &= \bar g(\ov\n \T,\dot x)=\bar g(\ov\n \T,\nu)\Delta \T  = u\Delta\T.\end{align}

\end{proof}

%
%

\begin{lemma}\label{evolgijGRW}
On $\Sigma_t$ the induced metric $g_{ij}$ satisfies
\eq{\dot g_{ij} =2h_{ij}\Delta \T  }
\end{lemma}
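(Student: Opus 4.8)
The final statement to prove is Lemma \ref{evolgijGRW}: along the flow \eqref{Div-Flow}, $\dot g_{ij} = 2h_{ij}\Delta\T$.

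The plan is to differentiate the induced metric $g_{ij} = \bar g(x_i, x_j)$ along the flow and use the standard variational formula for a normal graphical-type deformation. Concretely, I would write $\dot g_{ij} = \del_t \bar g(x_i, x_j) = \bar g(\ov\n_{\del_t} x_i, x_j) + \bar g(x_i, \ov\n_{\del_t} x_j)$, and then commute the time derivative with the spatial derivative via $\ov\n_{\del_t} x_i = \ov\n_{x_i}\dot x = \ov\n_{x_i}\big((\Delta\T)\nu\big)$. Expanding this using the product rule, $\ov\n_{x_i}\big((\Delta\T)\nu\big) = (\del_i \Delta\T)\nu + (\Delta\T)\ov\n_{x_i}\nu$. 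Pairing with $x_j$ and using that $\nu$ is orthogonal to the spacelike tangent directions $x_j$ (so $\bar g(\nu, x_j) = 0$), the first term drops out, and I am left with $\bar g(x_i, \ov\n_{x_i}\nu) \cdot (\Delta\T)$ — which, by the definition $h_{ij} = \bar g(\ov\n_i\nu, x_j)$ recorded in the excerpt (the Weingarten relation for the shape operator with respect to $-\nu$), equals $h_{ij}\Delta\T$. Symmetrizing in $i,j$ gives the factor $2$.

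The one point that needs a little care — and which I expect to be the only genuine obstacle — is the sign conventions: the excerpt works with the \emph{future-directed timelike} normal $\nu$ and defines the second fundamental form with respect to $-\nu$, i.e. $h_{ij} = \bar g(\ov\n_i \nu, x_j)$, whereas Gerhardt's reference uses the past-directed normal. So I would double-check that the Weingarten-type identity $\bar g(\ov\n_{x_i}\nu, x_j) = h_{ij}$ is exactly the one consistent with the conventions fixed earlier in Section \ref{Def}, and that no extra minus sign creeps in from $\bar g(\nu,\nu) = -1$ when differentiating the normalization $\bar g(\nu,\nu) = -1$ (which only tells us $\bar g(\ov\n_{x_i}\nu, \nu) = 0$, used to justify that $\ov\n_{x_i}\nu$ is tangential, but is not otherwise needed here). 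Once the conventions are aligned, the computation is a two-line application of the product rule.

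In summary: the proof is the standard first-variation-of-metric computation for a normal flow with speed function $f = \Delta\T$, specialized to the Lorentzian timelike-normal setting, yielding $\dot g_{ij} = 2 f h_{ij} = 2 h_{ij}\Delta\T$; the only thing worth stating carefully is the compatibility of the sign/orientation conventions, after which all cross terms involving derivatives of $\Delta\T$ vanish by orthogonality of $\nu$ to the tangent space.
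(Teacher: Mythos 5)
Your proof is correct and is essentially identical to the paper's one-line computation: differentiate $g_{ij}=\bar g(x_i,x_j)$, commute $\ov\n_{\del_t}x_i=\ov\n_{x_i}\dot x=\ov\n_{x_i}\big((\Delta\T)\nu\big)$, drop the $(\del_i\Delta\T)\nu$ term by orthogonality of $\nu$ to $x_j$, and read off $h_{ij}$ from $\bar g(\ov\n_{x_i}\nu,x_j)$; symmetrizing in $i,j$ yields the factor $2$. (There is a harmless index typo where you wrote $\bar g(x_i,\ov\n_{x_i}\nu)$ instead of $\bar g(x_j,\ov\n_{x_i}\nu)$, but the argument and conclusion are clearly right.)
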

\begin{proof}
\begin{align}
\partial_t\bar g(x_i,x_j)&=\bar g\br{\pard{}{x^i}(\Delta \T \nu),{x_j}}+\bar g\br{\pard{}{x^j}(\Delta \T \nu),x_i}=2h_{ij}\Delta \T. 
\end{align}
\end{proof}

\begin{lemma}On $\Sigma_t$ the future oriented normal $\nu$ satisfies
\begin{align}
\ov\n_{\dot x}\nu &= \n\Delta \T = u\n H+H\n u -n\frac{\vt''}{\vt}\n\T.
\end{align}
\end{lemma}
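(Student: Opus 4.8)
The plan is to reduce the claim to two short computations: first establish the general fact that for any normal flow $\dot x = F\nu$ with $F\in C^\infty(\Sigma_t)$ one has $\ov\n_{\dot x}\nu=\n F$ (the tangential gradient of $F$), and then specialize to $F=\Delta\T$ and expand using \eqref{rho}.

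For the general identity, I would first note that $\bar g(\nu,\nu)=-1$ is constant along the flow, so $\ov\n_{\dot x}\nu$ is $\bar g$-orthogonal to $\nu$ and hence tangent to $\Sigma_t$. Writing $x_i=\del_i x$ for the coordinate tangent fields, $\dot x$ and $x_i$ commute as coordinate vector fields on $\Sigma_t\times I$, so $\ov\n_{\dot x}x_i=\ov\n_{x_i}\dot x$. Using $\bar g(\nu,x_i)=0$ and $\bar g(\nu,\ov\n_{x_i}\nu)=\tfrac12\del_i\bar g(\nu,\nu)=0$ one gets
\[
\bar g(\ov\n_{\dot x}\nu,x_i)=-\bar g(\nu,\ov\n_{x_i}(F\nu))=-(\del_i F)\,\bar g(\nu,\nu)=\del_i F,
\]
and raising the index yields $\ov\n_{\dot x}\nu=\n F$. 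Applied with $F=\Delta\T$ this is the first equality in the statement.

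It then remains to expand $\n\Delta\T$. Tracing \eqref{rho} and using $u=\vt/v$ together with $H=g^{ij}h_{ij}$ gives $\Delta\T=uH-n\vt'(\rho)$, consistent with the identification of \eqref{Div-Flow} and \eqref{Flow} already noted. Differentiating tangentially,
\[
\n_k\Delta\T = H\,\n_k u + u\,\n_k H - n\,\vt''(\rho)\,\n_k\rho,
\]
and since $\T'=\vt$ one has $\n_k\T=\vt(\rho)\,\n_k\rho$, so $\vt''(\rho)\,\n_k\rho=\tfrac{\vt''}{\vt}\n_k\T$; substituting turns the last term into $-n\tfrac{\vt''}{\vt}\n_k\T$ and completes the proof.

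The computation is essentially bookkeeping, so I expect no genuine obstacle; the only point demanding care is the sign convention, namely that $\nu$ is the future-directed timelike \emph{unit} normal with $\bar g(\nu,\nu)=-1$ (which is exactly what produces the sign flip when pairing with $\nu$ above) and that the shape operator is taken with respect to $-\nu$, a convention already incorporated into \eqref{rho}.
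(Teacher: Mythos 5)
Your proof is correct and takes essentially the same route as the paper: you compute $\bar g(\ov\n_{\dot x}\nu,x_i)$ by commuting $\dot x$ and $x_i$ and exploiting $\bar g(\nu,\nu)=-1$, then differentiate $\Delta\T=uH-n\vt'(\rho)$ and convert $\n\rho$ to $\n\T$ via $\T'=\vt$. Phrasing the first step as the general identity $\ov\n_{\dot x}\nu=\n F$ for a normal flow $\dot x=F\nu$ before specializing to $F=\Delta\T$ is only a cosmetic repackaging of the paper's direct computation.
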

\pf{
We have 
\begin{align}
\bar g\br{\ov\n_{\dot x} \nu,x_i}=-\bar g\br{\nu,\ov\n_{\dot x} x_i} = -\bar g\br{\nu,\ov{\n}_{x_i} \dot{x}} = -\bar g\br{\nu,\ov{\n}_{x_i} (\Delta \T  \nu)} = \n_{i} \Delta \T 
\end{align}
and observe  
\eq{\n_i(-n\vt')= -n \vt'' \bar g(\ov\n r,x_i)= -n \frac{\vt''}{\vt} \bar g(\ov\n \T,x_i).}
}

\begin{lemma}On $\Sigma_t$ the function $u$ satisfies

\eq{\dot{u}-u\D u&=-\abs{\mr A}^{2}u^{2}-\br{\fr{Hu}{\rt n}-\rt{n}\vt'}^{2}-n\fr{\vt''}{\vt}(u^2-\vt^2)\\
&\hp{=}-u\ov{\Rc}(\n\T,\nu)+H\bar g(\n u,\n\T).}



\end{lemma}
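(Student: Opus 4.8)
The plan is to compute $\dot u$ and $\D u$ separately, starting from the identity $u=\bar g(\ov\n\T,\nu)$, and then to combine them and reorganise the resulting zeroth-order terms into the stated sum of squares. For the spatial derivatives I first differentiate $u=\bar g(\ov\n\T,\nu)$ tangentially; using $\ov\n^2\T=-\vt'\bar g$ (so the derivative hitting $\ov\n\T$ is purely normal and pairs trivially with $x_i$) together with the Weingarten relation $\ov\n_{x_i}\nu=h_i^k x_k$, which follows from $h_{ij}=\bar g(\ov\n_i\nu,x_j)$, this yields the compact identity $\n_i u=h_i^k\n_k\T$. Differentiating once more, substituting the contracted Codazzi identity $\n_j h^j_k=\n_k H+\ov{\Rc}(x_k,\nu)$ — which is precisely the computation already carried out in the proof of \Cref{AreaMonotone} — together with the slice identity \eqref{rho} in the form $\n_{jk}\T=-\vt' g_{jk}+u h_{jk}$, and finally tracing, I expect to obtain
\eq{\D u=\ip{\n H}{\n\T}+\ov{\Rc}(\n\T,\nu)-\vt' H+u\abs{A}^2.}

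For the time derivative I write $\dot u=\bar g(\ov\n_{\dot x}\ov\n\T,\nu)+\bar g(\ov\n\T,\ov\n_{\dot x}\nu)$. Since $\ov\n\T$ is a fixed ambient vector field with $\ov\n^2\T=-\vt'\bar g$, one has $\ov\n_{\dot x}\ov\n\T=-\vt'\dot x$, so the first summand equals $-\vt'\bar g(\dot x,\nu)=\vt'\D\T$, using $\dot x=(\D\T)\nu$ and $\bar g(\nu,\nu)=-1$. For the second summand I invoke the evolution of $\nu$ from the previous lemma, $\ov\n_{\dot x}\nu=\n\D\T$, together with the orthogonal decomposition $\ov\n\T=\n\T-u\nu$ (as used in the proof of \eqref{TCCuse}), so that $\bar g(\ov\n\T,\ov\n_{\dot x}\nu)=\ip{\n\T}{\n\D\T}$. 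Tracing \eqref{rho} gives $\D\T=uH-n\vt'$, hence
\eq{\dot u=\vt'\br{uH-n\vt'}+\ip{\n\T}{\n\br{uH-n\vt'}}.}

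Finally I form $\dot u-u\D u$: the two first-order terms involving $\n H$ cancel, leaving only $H\ip{\n u}{\n\T}$; the term $n\ip{\n\T}{\n\vt'}=n\vt''\ip{\n\T}{\n\rho}$ becomes $n\tfrac{\vt''}{\vt}(u^2-\vt^2)$ because $\n\T=\vt\,\n\rho$ and $\abs{\n\T}^2=u^2-\vt^2$; and what is left of the purely algebraic terms is $2\vt' uH-n(\vt')^2-u^2\abs{A}^2$. Completing the square and inserting $\abs{\mr A}^2=\abs{A}^2-\tfrac{H^2}{n}$ rewrites this last expression as $-\abs{\mr A}^2u^2-\br{\tfrac{Hu}{\rt n}-\rt n\,\vt'}^2$, which gives the claimed formula. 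The main difficulty I anticipate is purely bookkeeping: keeping the curvature sign conventions of \cite{ONeill:/1983} consistent throughout — especially in the Codazzi step, and bearing in mind that \eqref{h} and \eqref{rho} are quoted from a source using the opposite (past-directed) unit normal — and carefully distinguishing the ambient gradient $\ov\n\T$ from the tangential gradient $\n\T$ wherever the two appear side by side.
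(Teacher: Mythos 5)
Your proposal is correct and follows essentially the same route as the paper: both decompose $\dot u=\bar g(\ov\n_{\dot x}\ov\n\T,\nu)+\bar g(\ov\n\T,\ov\n_{\dot x}\nu)$, insert $\ov\n_{\dot x}\nu=\n\D\T$ and $\ov\n^2\T=-\vt'\bar g$, compute $\n_i u=h_i^k\n_k\T$ and trace the second derivative with the contracted Codazzi identity to obtain $\D u=\ip{\n H}{\n\T}+\ov{\Rc}(\n\T,\nu)-\vt'H+u|A|^2$, and then complete the square in the zeroth-order terms. The only cosmetic difference is that you write $\bar g(\ov\n_{\dot x}\ov\n\T,\nu)=\vt'\D\T$ directly, whereas the paper records the same quantity as $\D\T\,\ov\n^2_{\nu\nu}\T$.
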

\begin{proof}
We calculate
\begin{align}
\dot{u}&=\bar g\br{\ov\n_{\dot x}\nu,\ov\n \T }+\bar g\br{\nu,\ov\n_{\dot x}\ov\n \T }\\
&=u\bar g(\n H,\n \T)+H\bar g(\n u,\n \T)-n\frac{\vt''}{\vt}|\n\T|^2+\Delta \T   \ov\n^2_{\nu\nu} \T.
\end{align}
We also see that
\[\n_i u =  h_i^k\n_k\T +\ov\n^2_{i\nu} \T  =h_i^k\n_k\T\]
and
\begin{align}
\n^2_{ij} u&=\n_jh_{ik}\n^k\T +h_i^kh_{kj}u +h_i^k\ov\n^2_{x_kx_j}\T =\n_jh_{ik}\n^k\T +h_i^kh_{kj}u -\vt'h_{ij}.
\end{align}
Taking a trace, and applying Codazzi Mainardi on the first term we see that
\begin{align}
\Delta u &=\bar g(\n H,\n\T) +\bar g(\ov{\Rm}(x_j,\n \T)x^j,\nu)+|A|^2u-\vt'H\\
&=\bar g(\n H,\n\T)+|A|^2u+\ov{\Rc}(\n \T,\nu) -\vt'H. \label{Deltau}
\end{align}
Overall we have
\eq{
&\dot{u}-u\D u\\ 
=~&-|A|^2u^2+H\bar g(\n u,\n \T)-n\frac{\vt''}{\vt}(u^2-\vt^2)+\vt'\Delta \T 
-u\ov{\Rc}(\n \T,\nu) +\vt'uH\\
=~&-|A|^2u^2-u\ov{\Rc}(\n \T,\nu)+2\vt'uH -n\frac{\vt''}{\vt}u^2 +n(\vt''\vt-(\vt')^2)+H\bar g(\n u,\n \T)\\
=~&-\abs{A}^{2}u^{2}+\fr{1}{n}H^{2}u^{2}-\fr{1}{n}H^{2}u^{2}+2\vt'Hu-n\vt'^{2}-n\fr{\vt''}{\vt}u^{2}+n\vt''\vt\\
-&~u\ov{\Rc}(\n\T,\nu)+H\bar g(\n u,\n\T)\\
=~&-\abs{\mr A}^{2}u^{2}-\br{\fr{Hu}{\rt n}-\rt{n}\vt'}^{2}-n\fr{\vt''}{\vt}u^{2}+n\vt''\vt-u\ov{\Rc}(\n\T,\nu)+H\bar g(\n u,\n\T).
}
\end{proof}

\section{Gradient estimate}


\begin{lemma}\label{dfest}
There exist uniform bounds
\eq{\inf_{\Sigma_0} \T\leq\T(p, t)\leq\sup_{\Sigma_0} \T}
\end{lemma}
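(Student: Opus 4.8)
The goal is a maximum-principle/comparison argument for the scalar quantity $\T$ along the flow \eqref{Div-Flow}. From \Cref{dfevolGRW} we have the identity
\eq{\dot\T - u\Delta\T = 0}
on each $\Sigma_t$. Since $\Sigma_t$ is a closed hypersurface (compact without boundary, being a graph over the compact Cauchy slice $\cS_0$), this is a linear parabolic equation on a compact manifold with no zeroth-order term, and the support function $u = \vt/v$ is strictly positive wherever the flow hypersurface stays spacelike, so $u\Delta$ is (degenerate elliptic but, on a genuinely spacelike graph) uniformly elliptic for the purposes of the maximum principle. Thus the plan is simply to invoke the parabolic maximum principle: at an interior spatial maximum of $\T(\cdot,t)$ one has $\n\T = 0$ and $\Delta\T \leq 0$, hence $\dot\T \leq 0$ there; the standard argument (or Hamilton's ODE-comparison for the evolution of $\max_{\Sigma_t}\T$) then gives that $t\mapsto \sup_{\Sigma_t}\T$ is nonincreasing, and symmetrically $t\mapsto \inf_{\Sigma_t}\T$ is nondecreasing. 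Evaluating at $t=0$ yields
\eq{\inf_{\Sigma_0}\T \leq \T(p,t) \leq \sup_{\Sigma_0}\T}
for all $p$ and all times $t$ for which the flow exists and remains a spacelike graph.

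More carefully, I would phrase it via Hamilton's trick: the function $t\mapsto\varphi(t):=\max_{p\in\Sigma_t}\T(p,t)$ is Lipschitz, and at a.e.\ $t$ its derivative equals $\dot\T(p_t,t)$ at a point $p_t$ realizing the maximum; at such a point the spatial Hessian of $\T$ is nonpositive, so $\Delta\T(p_t,t)\leq 0$, and since $u>0$ the evolution equation gives $\varphi'(t) = \dot\T(p_t,t) = u\,\Delta\T(p_t,t)\leq 0$. Hence $\varphi$ is nonincreasing and $\T(p,t)\leq\varphi(t)\leq\varphi(0) = \sup_{\Sigma_0}\T$. The lower bound follows by applying the same reasoning to $-\T$, or equivalently to $\psi(t):=\min_{p\in\Sigma_t}\T(p,t)$, using that at a spatial minimum $\Delta\T\geq 0$.

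The only genuine subtlety — and the step I would flag as the main point to get right rather than the main obstacle — is ensuring that the coefficient $u$ is nonnegative (indeed positive) so that the sign in the maximum principle goes the correct way; this is automatic as long as $\Sigma_t$ remains spacelike, since then $v\geq 1$ and $\vt>0$ force $u=\vt/v>0$. One should therefore state the estimate as holding on the maximal time interval on which the flow exists as a spacelike graph (which is the setting in which all the preceding evolution equations were derived), and note that the bound is in fact independent of $t$, which is exactly the uniformity claimed. No regularity beyond what is already assumed for the flow is needed, and no curvature condition enters — the estimate is purely a consequence of \Cref{dfevolGRW} and compactness of $\cS_0$.
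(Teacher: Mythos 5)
Your argument is the same as the paper's: apply the parabolic maximum principle (via Hamilton's trick) to the evolution equation $\dot\T - u\Delta\T = 0$ from \Cref{dfevolGRW}, using compactness of $\Sigma_t$ and positivity of $u$. The paper simply states this in one line; your fleshed-out version is correct and adds nothing essentially new.
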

\begin{proof}
This follows directly from Lemma \ref{dfevolGRW} and the maximum principle.
\end{proof}

\begin{lemma}[Gradient bound]\label{ubound}
The support function is uniformly bounded along the flow.
\end{lemma}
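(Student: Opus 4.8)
The plan is to derive a uniform estimate on the support function $u = \vt/v$ via the maximum principle, applied not to $u$ directly but to a suitable auxiliary quantity that combines $u$ with a function of $\T$. The evolution equation for $u$ derived above reads
\eq{\dot u - u\D u = -\abs{\mr A}^2 u^2 - \br{\tfrac{Hu}{\rt n} - \rt n \vt'}^2 - n\tfrac{\vt''}{\vt}(u^2-\vt^2) - u\ov{\Rc}(\n\T,\nu) + H\bar g(\n u,\n\T).}
By \eqref{TCCuse} the term $-u\ov{\Rc}(\n\T,\nu)$ is nonpositive, and the first two terms are manifestly nonpositive; the only term of indefinite sign is $-n\tfrac{\vt''}{\vt}(u^2-\vt^2)$, which is dangerous precisely where $\vt'' > 0$. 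The standard trick is to consider $w := u\, e^{-\la \T}$ or $w := u/\phi(\T)$ for a well-chosen $\phi$, so that the bad curvature term in the evolution of $w$ is absorbed by the good terms coming from differentiating $\phi(\T)$ (recall $\T$ is bounded by Lemma \ref{dfest}, and $\vt, \vt', \vt''$ are all bounded functions of $\T$ on the relevant compact $r$-interval).

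First I would compute the parabolic evolution of $w$, using Lemma \ref{dfevolGRW} for $\dot\T - u\D\T = 0$ and the identity $\abs{\n\T}^2 = u^2 - \vt^2$; the cross term $\bar g(\n u,\n\T)$ and the $H\bar g(\n u,\n\T)$ term from the $u$-equation must be carefully tracked, since at an interior spatial maximum of $w$ one has $\n w = 0$, i.e. $\n u = \la u\,\n\T$ (for the exponential choice), which converts the gradient terms into lower-order expressions in $u$ and $\abs{\n\T}^2 = u^2-\vt^2$. Then at a first point where a large value of $w$ is attained, $\dot w \ge 0$, $\n w = 0$, $\D w \le 0$, and I would show the right-hand side becomes strictly negative once $u$ is large — here the key is that the genuinely negative terms $-\abs{\mr A}^2 u^2$ and $-\br{\tfrac{Hu}{\rt n}-\rt n\vt'}^2$ together control $H^2 u^2$ up to lower order (indeed $\abs{\mr A}^2 + \tfrac1n H^2 = \abs A^2$ and the square expands to give $-\tfrac1n H^2u^2 + 2\vt' Hu - n\vt'^2$, so the sum is $-\abs A^2 u^2 + 2\vt' Hu - n\vt'^2$), so that after completing the square in $H$ one is left with a negative multiple of $u^2$ plus terms bounded by $C(1+u)$, forcing an a priori bound $u \le C$. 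Combined with $u \ge \vt > 0$ this gives the two-sided bound, and since $v = \vt/u$, it is equivalent to the claimed gradient bound on $\rho$.

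The main obstacle I anticipate is choosing $\phi(\T)$ (equivalently the constant $\la$) so that the contribution of $-n\tfrac{\vt''}{\vt}(u^2-\vt^2)$ is dominated: one needs $\phi$ with $\phi''/\phi$ or $(\log\phi)'$ large enough relative to $\sup \abs{\vt''}/\vt$, which is possible exactly because $[a, b_0]$ is compact for $b_0 = \sup_{\Si_0}\T < b$ (so all these quantities are bounded) — but one must check the algebra closes, in particular that no uncontrolled positive multiple of $u^2$ survives after collecting every term. A secondary point is justifying the maximum principle argument rigorously on the moving compact hypersurfaces $\Si_t$ (spatial maximum attained, parabolic inequality at that point), which is routine given that each $\Si_t$ is compact, but should be stated. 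If the cleaner route via \eqref{Flow} and the estimate $H^2u^2 \le \abs A^2 u^2 \cdot$(something) is used instead, the same curvature term is the obstruction and the same device resolves it.
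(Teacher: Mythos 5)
Your overall strategy — evolve an auxiliary quantity built from $u$ and a bounded function of $\T$, use the maximum principle, feed in \eqref{TCCuse} and $\abs{\n\T}^2 = u^2 - \vt^2$, and close the algebra by completing the square in $H$ — is the right one and matches the paper's approach. But your specific choice of test function is wrong in a way that kills the argument, and your power-counting in $u$ is off.

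Compute the parabolic operator applied to a multiplicative quantity $w = u\psi(\T)$ (this covers both $\psi = e^{-\la\T}$ and $\psi = 1/\phi$). Using $\dot\T - u\D\T = 0$, one finds at a spatial maximum of $w$ (where $\n u = -u(\psi'/\psi)\n\T$):
\eq{\dot w - u\D w = \psi\,(\dot u - u\D u) + u^{2}\abs{\n\T}^{2}\Bigl(\tfrac{2\psi'^{2}}{\psi} - \psi''\Bigr).}
Since $\abs{\n\T}^2 = u^2 - \vt^2$, the second term scales like $u^4$, not $u^2$, and its sign is governed by $2\psi'^2/\psi - \psi''$. For $\psi = e^{\mp\la\T}$ this equals $\la^2\psi > 0$: a \emph{positive} $O(\la^2 u^4)$ term, which is catastrophic. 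For $\psi = 1/\phi$ one gets $2\psi'^2/\psi - \psi'' = \phi''/\phi^2$, so with the convex $\phi$ you propose (large $\phi''/\phi$) the auxiliary term is again $+u^4\phi''/\phi^2$. Meanwhile the gradient cross-term $H\bar g(\n u,\n\T)$ becomes at the max $-Hu\,(\psi'/\psi)\abs{\n\T}^2 = O(Hu^3)$, which after Young's inequality spawns another positive $O(u^4)$ term. So the algebra does not close; you flagged the risk of an uncontrolled positive $u^2$, but the actual failure mode is an uncontrolled positive $u^4$, and no tuning of $\la$ or a convex $\phi$ removes it.

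The paper avoids this by taking the \emph{additive} quantity $w = u + \T^2$. Then $\dot{(\T^2)} - u\D(\T^2) = -2u\abs{\n\T}^2$, which for large $u$ is $\approx -2u^3$ — a genuinely \emph{good} term one order higher than the bad $-n(\vt''/\vt)(u^2-\vt^2) = O(u^2)$ term. At a max $\n u = -2\T\n\T$, so the cross-term $H\bar g(\n u,\n\T) = -2H\T\abs{\n\T}^2 = O(Hu^2)$ is absorbed by $\e H^2u^2 + C_\e u^2$ with $\e < 1/n$, and the leftover $C_\e u^2$ is beaten by $-2u^3$. So the crucial idea you're missing is that the auxiliary function must be used \emph{additively} precisely so that its Laplacian produces a $-cu\abs{\n\T}^2$ good term rather than a $\pm cu^2\abs{\n\T}^2$ term of ambiguous sign; any function $f(\T)$ with $f'' \ge c > 0$ works. (If you insist on a ratio $u/\phi$, you would need $\phi$ \emph{concave}, which contradicts your stated requirement of $\phi''/\phi$ large positive; and even then the cross-term comparison does not close as cleanly as the additive case.)

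The rest of your plan — the identification of the three nonpositive terms, the use of \eqref{TCCuse}, the reliance on compactness of the $\T$-range for bounds on $\vt,\vt',\vt''$, and the justification of the maximum-principle setup on compact $\Si_t$ — is correct and matches the paper.
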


\pf{

Define
\eq{w=u+\T^2.}
Then $w$ satisfies
\eq{\dot{w}-u\D w&\leq -\fr{H^{2}u^{2}}{n}+2Hu\vt'+H\bar g(\n u,\n\T)-2u|\n\T|^2+c(u^{2}+1)\\
&\leq-\frac{H^{2}u^{2}}{n}+2Hu\vt'-2H\T(u^2-\vt^2)-2u^3\\&\hp{=}+c(u^{2}+1)+H\bar g(\n w,\n\T)\\
&\leq\left(\e-\frac 1 n\right)H^{2}u^{2}-2u^3+c_\e(u^{2}+1)+H\bar g(\n w,\n\T),\\
}
where we estimated using Young's inequality on the final line. At a large maximal point of $w$, $u$ must also be very large, as $\rho$ is bounded. Setting $\e=\frac 1 n$, the result follows from the maximum principle.

}

\begin{cor}\label{Reg}
Along \eqref{Flow} we have uniform $C^{m}$-estimates for every $m$ and long-time existence of the flow.
\end{cor}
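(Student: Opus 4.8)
The plan is to derive the $C^m$-estimates and long-time existence from the parabolic nature of the flow combined with the a priori estimates already established. First I would observe that the flow \eqref{Flow}, written as a graph over $\cS_0$ for the function $\rho$, is a scalar quasilinear parabolic equation: using \eqref{rho} we have $\dot x = (\D_{\Si_t}\T)\nu$ translates into $\partial_t\rho = \frac{\D_{\Si_t}\T}{v^{-1}}\cdot(\text{normalization})$, with the right-hand side involving $\rho$, $\n\rho$ and $\n^2\rho$ through the induced metric $g_{ij}=-\del_i\rho\del_j\rho+\vt^2(\rho)\hat g_{ij}$, $v$, and $h_{ij}$ via \eqref{h}. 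The key point is that \Cref{dfest} gives a uniform bound on $\rho$ (equivalently $\T$), confining the flow to a compact subinterval $[a',b']\subset(a,b)$ where $\vt$ and its derivatives are bounded, and \Cref{ubound} gives a uniform bound on the support function $u=\vt/v$, hence a uniform bound on $v$ and therefore on $|\n\rho|$. Together these mean the equation is uniformly parabolic with $C^0$-bounds on the solution and its spatial gradient on any time interval of existence.

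The next step is to bootstrap. With uniform parabolicity and uniform $C^1$-bounds in hand, Krylov--Safonov / De Giorgi--Nash--Moser type estimates (for the quasilinear equation, one typically differentiates the equation and applies the interior Hölder estimate for linear parabolic equations in nondivergence form) yield a uniform $C^{1,\alpha}$-estimate for $\rho$. Then Schauder estimates applied iteratively to the differentiated equations give uniform $C^{m,\alpha}$-bounds for all $m$, with constants depending only on the $C^0$- and $C^1$-bounds, the background data $(\cS_0,\hat g)$, $\vt$, and $m$ — and crucially not on the time of existence. I would cite the standard references (e.g. Krylov's or Lieberman's book, or the treatment in Gerhardt's book \cite{Gerhardt:/2006} which is already used throughout) rather than reproducing the estimates.

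Finally, long-time existence follows by a continuation argument: short-time existence for the quasilinear parabolic equation is standard since the initial hypersurface $\Si_0$ is smooth and spacelike; if the maximal existence time $T$ were finite, the uniform $C^m$-estimates would extend $\rho(\cdot,t)$ and all its derivatives continuously up to $t=T$, producing a smooth spacelike limiting hypersurface $\Si_T$ (spacelikeness being preserved by the uniform bound on $v$), from which the flow could be restarted, contradicting maximality. Hence $T=\8$. The main obstacle — really the only substantive point — is ensuring uniform parabolicity, i.e. that the coefficients stay uniformly elliptic; this is exactly what the gradient bound \Cref{ubound} secures, since degeneracy could only occur as $v\to\8$, equivalently $u\to\8$, or as $\rho$ approaches the endpoints $a$ or $b$, both of which are excluded. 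Everything else is routine parabolic regularity theory.
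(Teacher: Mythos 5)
Your proposal matches the paper's proof essentially step for step: write the flow as a scalar quasilinear parabolic equation for the graph function $\rho$, observe that Lemmas \ref{dfest} and \ref{ubound} give $C^0$/$C^1$ bounds and uniform parabolicity (confinement of $\rho$ and a lower bound on $v$), then apply De~Giorgi--Nash--Moser to get $C^{1,\alpha}$ and Schauder iteration for all higher orders, finishing with a standard continuation argument. The only cosmetic difference is the choice of reference texts; the reasoning and the role of each a priori estimate are identical to the paper's.
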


\pf{
Under \eqref{Flow}, the graph function $\rho(\cdot,t)$ satisfies a quasi-linear parabolic equation which, by Lemmas \ref{dfest} and \ref{ubound}, is uniformly parabolic. Lemmas \ref{dfest} and \ref{ubound} and standard application of the Nash--Moser--Di Giorgi theorem \cite[Ch.~XII]{Lieberman:/1998} provides uniform $C^{1+\alpha;\frac{1+\alpha}{2}}$ bounds on $\rho$, and then Schauder theory \cite[Thm.~IV.10.1, p.~351-352]{LadyzenskayaSolonnikovUraltseva:/1968} implies uniform estimates to all orders. Standard parabolic existence theory completes the proof.
}

\section{Completion of the proof}

\pf{
We have to prove that the flow converges to a coordinate slice $\{r=\const\}$ and finish the proof of \Cref{main}. We will prove that the function $\T$ converges to a constant as $t\ra\infty$ using similar methods to \cite[Thm.~3.1]{AltschulerWu:01/1994} and
\cite[Sec.~6.2]{Schnurer:07/2002}. Recall that $\T$ satisfies
\eq{\dot{\T}-u\D \T=0,}
where $u\D_{\Si_{t}}$ is uniformly elliptic due to the support function estimates. Hence $\T$ enjoys the validity of the strong maximum principle for parabolic operators and hence the oscillation of $\T$,
\eq{\om(t)=\osc\T(t,\cdot)=\max \T(t,\cdot)-\min\T(t,\cdot)}
is strictly decreasing, unless $\T$ is constant at some (and hence all) $t>0$, in which case we would be done.

Suppose that $\om$ does not converge to zero as $t\ra \8$. Then it converges to another value $\om_{\8}>0$. Define a sequence of flows by
\eq{x_{i}(t,\xi)=x(t+i,\xi)}
and the corresponding functions $\T_{i}$.
Due to \Cref{Reg}, on a given time interval $[0,T]$ we can apply Arzéla-Ascoli and obtain smooth convergence of a subsequence of $x_{i}$ to a limit flow
\eq{x_{\8}\cn [0,T]\x \cS_{0}\ra N,}
which solves the same flow equation \eqref{Div-Flow}. By construction, the oscillation of the associated limiting function $\T_{\8}$ is $\om_{\8}>0$ constantly, which is a contradiction to the strong maximum principle, which holds for $\T_{\8}$ as well.
We conclude that 
\eq{\lim_{t\ra \8}\om(t)=0}
and hence every subsequential limit of the original flow $x$ must be a time-slice of the spacetime $N$. By the barrier estimates in Lemma \ref{dfest}, this timeslice is unique and we obtain that the whole flow $x$ converges to a timeslice.

We conclude the proof by showing that the isoperimetric inequality holds. Hence let $\Si$ satisfy the assumption of \Cref{main} and evolve $\Si$ by the flow \eqref{Div-Flow}. Define
\eq{S_{R}=\{r=R\},\q f_0(R)=\vol(\hat S_{R}),\q f_1(R)=\abs{S_{R}}.}
Clearly $f_0$ is monotonically increasing in $R$, and $\varphi = f_1\circ f_0^{-1}$. As $\vol(\Sigma_t)$ is fixed, this defines a unique slice $S_{R_\8}$ to which the flow must converge with area $\varphi(\vol(S_{R_{\8}}))$. By the monotonicity properties of Lemma \ref{AreaMonotone} the claimed isoperimetric inequality holds.



If equality holds and $\Si$ was not umbilic, then equation \eqref{Ev-area} implies that variations of $\Si$ along \eqref{Div-Flow} would violate this inequality. Hence in the equality case $\Si$ must be umbilic. 

It remains to prove item (ii) of \Cref{main}. On a time slice equality holds by construction. Hence assume equality holds on $\Si$ and evolve $\Si$ by \eqref{Div-Flow}. The variation formula for the area \eqref{Ev-area} and \eqref{TCCuse-1} show that
\eq{\del_{t}\abs{\Si_{t}}\geq \frac{n}{n-1}\int_{\Si_{t}}u\ov{\Rc}(W,W). }
Hence we must have
\eq{\ov{\Rc}(W,W)=0,}
for otherwise we would reach a contradiction to what we have already proved. Due to the strict null convergence condition we obtain $W=0$ and  from \eqref{TCCuse-2} we deduce
\eq{0=\rt{\fr{u^{2}}{\vt^{2}}-1}=\rt{\fr{1}{v^{2}}-1},}
hence $v=1$ and $\n\T=0$. This shows that $\Si$ is a timeslice.
}

\section*{Acknowledgments}
This work was made possible through a research scholarship JS received from the DFG and which was carried out at Columbia University in New York. JS would like to thank the DFG, Columbia University and especially Prof.~Simon Brendle for their support.

The authors would like to thank Prof. Mu-Tao Wang for an interesting discussion.

\bibliographystyle{shamsplain}
\bibliography{Bibliography.bib}

\end{document}